\documentclass[12pt,a4paper]{amsart}
\usepackage[utf8]{inputenc}
\usepackage[T1]{fontenc}
\usepackage{amsmath}
\usepackage{amssymb}
\usepackage{amsfonts}
\usepackage{amsthm}
\usepackage[all]{xy}
\usepackage{tikz}
\usepackage{hyperref}
\usepackage[shortlabels]{enumitem}
\usepackage{bbold}
\usepackage{geometry}
\usepackage{tabularray}
\usepackage{geometry}
\usepackage{apptools}
\usepackage{xcolor}
\usepackage{faktor}
\usepackage{xfrac}   

\newtheorem{thm}{Theorem}

\theoremstyle{definition}
\newtheorem{example}{Example}
\newtheorem{remark}{Remark}

\def\Og{\operatorname{OG}}
\def\Oo{\operatorname{O}}
\def\SO{\operatorname{SO}}
\def\Sym{\operatorname{Sym}}
\def\PP{\operatorname{P}}
\def\BB{\operatorname{B}}
\def\Pr{\mathbb{P}}
\def\coh{\operatorname{H}}
\def\RR{\mathcal{R}}
\def\CC{\mathbb{C}}
\def\QQ{\mathbb{Q}}
\def\zz{\mathbf{z}}
\def\tt{\mathbf{t}}
\def\TT{\operatorname{T}}
\def\pt{\operatorname{pt}}
\def\id{\operatorname{id}}
\def\ff{\varphi}
\def\rk{{\rm rk}}
\newcommand{\spa}{span}
\newcommand{\OG}[2]{\Og(#1,#2)}
\DeclareMathOperator{\res}{Res}
\usepackage[nice]{nicefrac}
\def\sfrac#1#2{\text{\large$\nicefrac{#1}{#2}$}}

\begin{document}

\author{Andrzej Weber}\address{Institute of Mathematics, University of Warsaw, Poland}
\email{aweber@mimuw.edu.pl}
\thanks{A.W.~supported by Polish National Science Center grant number 2022/47/B/ST1/01896}

\author{Magdalena Zielenkiewicz}
\address{Institute of Mathematics, University of Warsaw, Poland}
\email{magdaz@mimuw.edu.pl}

\title[Push-forward formulas for even orthogonal Grassmannians]{A note on the push-forward formulas\\ for even orthogonal Grassmannians }
\date{}

\begin{abstract}
We revisit residue formulas for the push-forward in the cohomology of the even orthogonal Grassmannian. This space has two components, and the formula for a single component demands separate attention. We correct errors spread throughout the literature.
\end{abstract}

\maketitle

Push--forward formulas for Grassmann bundles play a significant role in intersection theory. We take a closer look at the residue formulas applicable to bundles having even orthogonal Grassmannians as their fibers. While the existing literature covers results only for fibers
$\OG{n}{2n}=\sfrac{\Oo_{2n}}{\PP}$ -- a variety with two components, statements regarding $\OG{n}{2n}^+=\sfrac{\SO_{2n}}{\PP}$ remain unproven and are formulated incorrectly. We present a complete solution with a proof in the language of equivariant cohomology. Our formulas in equivariant cohomology are equivalent to the corresponding expressions given in terms of the formal Chern roots of the involved vector bundle.

\section{The main results}
Let $V$ be a $2n$-dimensional complex vector space equipped with a non-degenerate, symmetric bilinear form $\Omega$. 
Let $\OG{n}{2n}$ be the variety, called orthogonal Grassmannian, parametrizing 
 maximal isotropic subspaces of $V$. 
 It can be written as the quotient $\sfrac{\Oo(V)}{\PP}$, where $\PP$ is the parabolic subgroup stabilising a fixed maximal isotropic subspace.
The variety $\OG{n}{2n}$
has two isomorphic connected components,  which we denote $\OG{n}{2n}^{+}$ and $\OG{n}{2n}^{-}$. 
The sign depends on the choice of a reference isotropic subspace, which will be made canonical after choosing a canonical form of $\Omega$. 
We study the push-forward along the projection to the one-point space $\pi: \OG{n}{2n}^{\pm} \to \pt$ in torus-equivariant cohomology. There are residue formulas for such push-forwards (see for example \cite{dp,mz} or \cite{wz} for a K--theoretical version), but they consider the two connected components together, i.e. the push-forward along the projection $\sfrac{\Oo(V)}{\PP} \to \pt$.
While there is an isomorphism $\OG{n}{2n}^{+} \simeq \OG{n-1}{2n-1}$ (see e.g.~\cite[p~29.]{tevelev}), it does not easily yield compact push-forward formulas for $\OG{n}{2n}^{+}$. The main obstacle is  the rank of the maximal torus in $\SO_{2n-1}$ which is smaller than the rank of the maximal torus in $\SO(V)\simeq\SO_{2n}$. The action of the additional $\CC^*$ is not visible in the presentation  $\OG{n}{2n}^{+}\simeq\sfrac{\SO_{2n-1}}{(\PP\cap \SO_{2n-1})}$. \\

Let $\TT\simeq(\CC^*)^n$ be a torus. 
For a $\TT$-space $X$ let $\coh_{\TT}^{*}(X)$ denote the $\TT$-equivariant cohomology ring of $X$ with coefficients in $\QQ$. For $X = \OG{n}{2n}^{\pm}$ and the action of a maximal torus $\TT\subset\SO(V)$, the fixed point set of the action is finite. Any class $\alpha  \in \coh_{\TT}^{*}(X)$ is determined by its restrictions to fixed points. Since $\coh_{\TT}^{*}(\pt) \simeq \QQ[t_1, \dots, t_n]$, each such restriction is a polynomial in the characters $t_1,\dots , t_n$ of $\TT\simeq (\CC^{*})^{n}$.  We will assume that $\alpha$ comes from $\coh_{\SO_{2n}}^{*}(\OG{n}{2n}^\pm)$, then the restriction of $\alpha$ at one fixed point determines all other restrictions. They differ by the action of the Weyl group.
\\

We will be interested in the classes of the form $\alpha=\ff(\RR^\vee)$, where $\RR$ is the tautological bundle of rank $n$ over $\OG{n}{2n}^\pm$ and $\ff$ is a characteristic class, that is, a polynomial in the Chern classes, which is identified with a symmetric polynomial in $n$ variables. The Schur classes are of special interest. 
We prefer to look at the dual bundle $\RR^\vee$ to get rid of 
annoying signs and to have formulas compatible with \cite{PraRat}. Also note that the tangent space $T_{\OG{n}{2n}}$ is isomorphic to $\wedge^2(\RR^\vee)$, which will make our local contributions to the push-forward entirely expressed in terms of the Chern roots of $\RR^\vee$. 
In this note, we show that the push-forward of characteristic classes of $\RR^\vee$ can be expressed as residues of certain meromorphic functions.\\

Let us choose a basis $\{ e_1, \dots, e_n, f_n, \dots, f_1 \}$ of $V$ in such a way that 
$\Omega(e_i, f_i)=1$ and $\Omega(e_i, e_j) = 0 = \Omega(f_i, f_j) = \Omega(e_i, f_j)$ for $i \neq j$, then the fixed points are the subspaces
\[p=\spa \{ w_1, \dots, w_n \}, \textrm{ where } w_i=e_i \textrm{ or } w_i = f_i.\]
A fixed point is contained in the component $\OG{n}{2n}^{+}$ if and only if it has an even number of vectors $f_i$ in the presentation above. 
Thus the parabolic group $\PP$ is block-lower-triangular. The maximal torus $\TT$ is the standard diagonal torus. Let $t_1,t_2,\dots,t_n$ be the characters corresponding to the first $n$ coordinates. In other words, the torus consists of the diagonal matrices $$x=diag(x_1,\dots,x_n,x_n^{-1},\dots,x_1^{-1})$$ and $t_i(x)=x_i$.
We introduce formal variables $z_1, \dots, z_n$ and given a meromorphic form $$g(z_1,\dots,z_n)dz_1 \dots dz_n$$ we denote by $\res_{\zz = \infty}g(\zz) d\zz$ the iterated residue operation
\[\res_{z_n=\infty}(\res_{z_{n-1}=\infty} (\dots (\res_{z_1=\infty} g(z_1,\dots, z_n) dz_1)\dots )dz_{n-1}) dz_n. \]
We prove the following:

\begin{thm}\label{thm:push}
Let $\alpha =\ff(\RR^\vee) \in \coh_{\TT}^{*}(\OG{n}{2n}^{+})$ and let $\pi^{+}: \OG{n}{2n}^{+} \to \pt$ be the constant map. Then 
\[\pi^{+}_{*} \alpha = \frac{2^{n-1}}{n! \prod_{i} t_i} \res_{\zz =\infty} \ff(\zz)\frac{ \prod_{i\neq j}(z_j-z_i) \prod_{i < j}(z_i + z_j)\prod_{i} z_i \  (\prod_{i} t_i + \prod_{i}z_i)}{\prod_{i,j}(t^2_i-z^2_j)}d\zz.\]

Let  $\alpha =\ff(\RR^\vee) \in \coh_{\TT}^{*}(\OG{n}{2n}^{-})$ and let $\pi^{-}: \OG{n}{2n}^{-} \to \pt$. Then 
\[\pi^{-}_{*} \ff(\RR^\vee) = \frac{2^{n-1}}{n! \prod_{i} t_i} \res_{\zz =\infty} \ff(\zz)\frac{ \prod_{i\neq j}(z_j-z_i) \prod_{i < j}(z_i + z_j)\prod_{i} z_i \  (\prod_{i} t_i - \prod_{i}z_i)}{\prod_{i,j}(t_i^2-z_j^2)}d\zz.\]

\end{thm}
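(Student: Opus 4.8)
\medskip
\noindent\textbf{Proof strategy.} The plan is to compute both push-forwards by Atiyah--Bott--Berline--Vergne localization and then to recognize the resulting sums over $\TT$-fixed points as the stated iterated residues. I will carry this out for $\OG{n}{2n}^{+}$; the formula for $\OG{n}{2n}^{-}$ then follows by applying the outer automorphism of $\SO_{2n}$ induced by the element of $\Oo_{2n}$ that interchanges $e_n$ and $f_n$. This automorphism exchanges the two components of $\OG{n}{2n}$, carries the tautological bundle to the tautological bundle (hence fixes $\ff(\RR^{\vee})$), and acts on $\coh^{*}_{\TT}(\pt)=\QQ[t_1,\dots,t_n]$ by $t_n\mapsto -t_n$; substituting $t_n\mapsto -t_n$ in the displayed formula for $\pi^{+}_{*}$ turns the factor $\prod_i t_i+\prod_i z_i$ into $-(\prod_i t_i-\prod_i z_i)$ and the prefactor $1/\prod_i t_i$ into $-1/\prod_i t_i$, which reproduces the formula for $\pi^{-}_{*}$.

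First I would set up localization. The $\TT$-fixed points of $\OG{n}{2n}^{+}$ are the spaces $p_I=\spa\{w_1,\dots,w_n\}$ with $w_i=f_i$ for $i\in I$ and $w_i=e_i$ otherwise, as $I$ runs over the even-cardinality subsets of $\{1,\dots,n\}$. The restriction $\RR^{\vee}|_{p_I}$ splits $\TT$-equivariantly into a sum of lines whose equivariant Chern roots form the tuple $s^{I}=(s^{I}_1,\dots,s^{I}_n)$ with $s^{I}_i=+t_i$ for $i\notin I$ and $s^{I}_i=-t_i$ for $i\in I$, so that $\ff(\RR^{\vee})|_{p_I}=\ff(s^{I}_1,\dots,s^{I}_n)$. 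Since $T_{\OG{n}{2n}}\simeq\wedge^{2}(\RR^{\vee})$, the equivariant Euler class of the tangent space is $e_{\TT}(T_{p_I}\OG{n}{2n}^{+})=\prod_{i<j}(s^{I}_i+s^{I}_j)$, and localization gives
\[\pi^{+}_{*}\alpha=\sum_{\substack{I\subseteq\{1,\dots,n\}\\ |I|\ \text{even}}}\frac{\ff(s^{I}_1,\dots,s^{I}_n)}{\prod_{i<j}(s^{I}_i+s^{I}_j)};\]
for $\OG{n}{2n}^{-}$ the same identity holds with the sum taken over odd $|I|$.

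It remains to identify this sum with the residue. Expanding $\res_{\zz=\infty}$ variable by variable, each $\res_{z_j=\infty}$ equals minus the sum of the finite residues of the one-variable rational form, whose only poles sit at $z_j=\pm t_k$ and come from $\prod_{i,j}(t_i^2-z_j^2)$. The numerator factors $\prod_{i\neq j}(z_j-z_i)$ and $\prod_{i<j}(z_i+z_j)$ vanish as soon as two variables are specialized to $\pm$ the same $t_k$, so only specializations $z_j=\epsilon_j t_{\sigma(j)}$ with $\sigma\in S_n$ and $\epsilon\in\{\pm1\}^n$ survive; write $I$ for the subset determined by $(\sigma,\epsilon)$, so that $(\epsilon_j t_{\sigma(j)})_j$ is a reordering of $s^{I}$. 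For each such specialization, the residue of the simple-pole part of $t_{\sigma(j)}^2-z_j^2$ together with the value of its complementary linear factor contributes $\pm 1/(2t_{\sigma(j)})$ per variable; the surviving $t$-only part $\prod_{i\neq k}(t_i^2-t_k^2)$ of the denominator, the specialized $\prod_{i\neq j}(z_j-z_i)$ and the specialized $\prod_{i<j}(z_i+z_j)$ recombine -- via the elementary identity
\[\prod_{k<l}\frac{(s^{I}_k-s^{I}_l)^2\,(s^{I}_k+s^{I}_l)}{(t_k-t_l)^2\,(t_k+t_l)^2}=\frac{1}{\prod_{k<l}(s^{I}_k+s^{I}_l)},\]
valid because each $s^{I}_k=\pm t_k$ with the $t_k$ pairwise distinct -- into $1/\prod_{k<l}(s^{I}_k+s^{I}_l)$; the various sign contributions ($(-1)^n$ from the $n$ uses of $\res_{\infty}=-\sum(\text{finite residues})$, the signs of the simple-pole residues, and the sign of the specialization of $\prod_i z_i$) cancel; the $n!$ pairs $(\sigma,\epsilon)$ producing a given $I$ absorb the $1/n!$; and -- this is the crux -- the factor $\prod_i t_i+\prod_i z_i$ specializes to $(1+(-1)^{|I|})\prod_i t_i$, that is, to $2\prod_i t_i$ when $|I|$ is even and to $0$ when $|I|$ is odd, which is exactly what cuts the sum down to the fixed points lying in $\OG{n}{2n}^{+}$. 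Tracking the remaining scalars -- the $2^{n-1}$, the $2^{-n}$ from the factors $2t_{\sigma(j)}$, the $2$ from the selector and the powers of $\prod_i t_i$ -- reproduces the prefactor of the theorem and finishes the identification.

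The main difficulty is precisely this sign bookkeeping, and above all the verification that the factor $\prod_i t_i+\prod_i z_i$ selects $\OG{n}{2n}^{+}$ rather than $\OG{n}{2n}^{-}$: it is exactly this sign -- equivalently, the choice of the correct ``selector'' for a single component -- that is misstated in the literature. A minor technical point is to justify the residue manipulations for an arbitrary polynomial $\ff$; since every term is $\QQ$-linear in $\ff$, it suffices to check the identity for monomials $\ff=z_1^{a_1}\cdots z_n^{a_n}$, for which the iterated residue is a finite computation. A convenient way to organize -- and to double-check -- the whole argument is to verify instead the two residue identities for the combinations $\pi^{+}_{*}+\pi^{-}_{*}$ and $\pi^{+}_{*}-\pi^{-}_{*}$: the first is the push-forward along $\sfrac{\Oo(V)}{\PP}\to\pt$ and is governed by the simpler selector $2\prod_i t_i$ (so one recovers the classical residue formula for $\OG{n}{2n}$), while the second equals $\sum_{I}(-1)^{|I|}\ff(s^{I})/\prod_{i<j}(s^{I}_i+s^{I}_j)$ and is governed by $2\prod_i z_i$; the two formulas of the theorem are then obtained by taking half of the sum and half of the difference.
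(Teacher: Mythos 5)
Your proposal is correct in substance, and its crux is the same as the paper's: the factor $\prod_i t_i + \prod_i z_i$ (resp.\ $\prod_i t_i - \prod_i z_i$) introduces no new poles in the $z$-variables and, evaluated at a residue point $z_i=\pm t_i$, equals $2\prod_i t_i$ or $0$ according to the parity of the number of minus signs, so up to the normalization $2\prod_i t_i$ it is exactly a $0/1$ selector for the fixed points of a single component inside the localization sum. The route differs in how much is recomputed: the paper simply inserts this selector into the already established residue formula \eqref{twocomponents} for the whole $\OG{n}{2n}$, so the only new checks are that no poles are added and that the selector takes the values $0$ and $1$ at the relevant points; you instead re-derive the residue--fixed-point correspondence from scratch, and all the ``signs cancel'', ``the $n!$ pairs absorb the $1/n!$'' bookkeeping is precisely the content of \eqref{twocomponents}, so in your write-up it should be carried out rather than asserted. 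It does work out: the integrand is symmetric in $\zz$, so one may take $\sigma=\id$ and multiply by $n!$; each simple pole contributes $\epsilon_j/(2t_j)$ after the sign from $\res_{\infty}=-\sum(\text{finite residues})$; $\prod_i s_i\prod_j\epsilon_j=\prod_i t_i$; and your displayed identity collapses the specialized Vandermonde-type factors to $1/\prod_{i<j}(s_i+s_j)$, leaving the selector times the Atiyah--Bott--Berline--Vergne term. Your derivation of the $\pi^-$ formula from the $\pi^+$ one via the outer automorphism $t_n\mapsto -t_n$ is a nice alternative to the paper's direct use of the second selector $X^-$, and the concluding suggestion to verify the sum and the difference of $\pi^{\pm}_*$ is logically equivalent to the paper's argument.

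One genuine slip is the fixed-point restriction of $\RR^\vee$. Since $e_i$ has character $t_i$, the Chern roots of $\RR^\vee$ at $p_I$ are $-t_i$ for $i\notin I$ and $+t_i$ for $i\in I$, i.e.\ the plus signs occur where $w_i=f_i$ --- the opposite of your $s^I$. For even $n$ this is harmless, because the global sign flip $I\mapsto I^c$ permutes the even-cardinality subsets, so your localization sum is unchanged; but for odd $n$ it interchanges the two components: with the correct weights, the sign patterns with an even number of minus signs correspond to the fixed points with $|I|\equiv n \pmod 2$, i.e.\ to the component of the reference point $\spa\{f_1,\dots,f_n\}$ stabilized by $\PP$, not to ``the component with an even number of $f_i$'s''. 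This is a labelling subtlety on which the paper's own conventions are also in tension, but as written your restriction weights are wrong, and you should correct them and then state explicitly which component the selector $\prod_i t_i+\prod_i z_i$ picks out.
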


We give an alternative form of residue formula, which is slightly less complicated:
\begin{thm}\label{cor:push}
With the assumptions as above
\[\pi^{+}_{*} \alpha = \frac{2^{n-1}}{\prod_{i} t_i\;\prod_{i<j}(t_j^2 - t_i^2)} \res_{\zz =\infty} \ff(\zz)\frac{ \prod_{i<j}(z_j-z_i) \prod_{i} z_i \ (\prod_{i} t_i + \prod_{i}z_i)}{\prod_{i}(t_i^2-z_i^2)} d\zz.\]
\[\pi^{-}_{*} \alpha  = \frac{2^{n-1}}{\prod_{i} t_i\;\prod_{i<j}(t_j^2 - t_i^2)} \res_{\zz =\infty}\ff(\zz) \frac{ \prod_{i<j}(z_j-z_i) \prod_{i} z_i \  (\prod_{i} t_i - \prod_{i}z_i)}{\prod_{i}(t_i^2-z_i^2) }d\zz.\]
\end{thm}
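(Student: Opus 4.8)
The plan is to deduce Theorem~\ref{cor:push} from Theorem~\ref{thm:push} by a purely algebraic rewriting of the residue, the two ingredients being the Cauchy determinant identity and the elementary fact that on a product of one--variable rational forms the iterated residue at infinity equals the product of the one--variable residues, and is therefore insensitive to the order of the variables. I carry out the argument for $\OG{n}{2n}^{+}$; the component $\OG{n}{2n}^{-}$ is handled verbatim with $\bigl(\prod_i t_i+\prod_i z_i\bigr)$ replaced by $\bigl(\prod_i t_i-\prod_i z_i\bigr)$. The first step is to repackage the Vandermonde--type factors occurring in Theorem~\ref{thm:push}: since $\prod_{i\neq j}(z_j-z_i)=(-1)^{\binom n2}\prod_{i<j}(z_j-z_i)^2$ and $\prod_{i<j}(z_j-z_i)\prod_{i<j}(z_i+z_j)=\prod_{i<j}(z_j^2-z_i^2)$,
\[\prod_{i\neq j}(z_j-z_i)\,\prod_{i<j}(z_i+z_j)=(-1)^{\binom n2}\,\prod_{i<j}(z_j-z_i)\;\prod_{i<j}(z_j^2-z_i^2).\]

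Next I would desymmetrize the denominator by the Cauchy determinant identity, taken with $x_i=t_i^2$ and $y_j=-z_j^2$:
\[\frac{\prod_{i<j}(z_j^2-z_i^2)}{\prod_{i,j}(t_i^2-z_j^2)}=\frac{1}{\prod_{i<j}(t_i^2-t_j^2)}\det\left(\frac{1}{t_i^2-z_j^2}\right)_{1\le i,j\le n}=\frac{1}{\prod_{i<j}(t_i^2-t_j^2)}\sum_{\sigma\in S_n}\operatorname{sgn}(\sigma)\prod_{i}\frac{1}{t_{\sigma(i)}^2-z_i^2}.\]
Substituting this together with the previous identity into the formula of Theorem~\ref{thm:push} and using linearity of $\res_{\zz=\infty}$, the ``spread--out'' denominator $\prod_{i,j}(t_i^2-z_j^2)$ gets replaced by a signed sum over $\sigma\in S_n$ of ``diagonal'' denominators $\prod_i(t_{\sigma(i)}^2-z_i^2)$, at the cost of an overall factor $(-1)^{\binom n2}/\prod_{i<j}(t_i^2-t_j^2)$.

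Then I would collapse the sum over $S_n$. The factors $\ff(\zz)$, $\prod_i z_i$ and $\bigl(\prod_i t_i+\prod_i z_i\bigr)$ are symmetric in $z_1,\dots,z_n$ while $\prod_{i<j}(z_j-z_i)$ is alternating, so relabelling the integration variables by $\sigma$ inside the $\sigma$--th summand replaces $\prod_i(t_{\sigma(i)}^2-z_i^2)$ by $\prod_i(t_i^2-z_i^2)$ and multiplies $\prod_{i<j}(z_j-z_i)$ by $\operatorname{sgn}(\sigma)$. Since each summand, after expanding its polynomial numerator into monomials, is a linear combination of products $\prod_i g_i(z_i)$ of one--variable rational forms, its iterated residue at infinity is the product $\prod_i\res_{z_i=\infty}g_i(z_i)\,dz_i$ and is unchanged by this relabelling; with $\operatorname{sgn}(\sigma)^2=1$ every summand thus contributes the same residue
\[I:=\res_{\zz=\infty}\ff(\zz)\,\frac{\prod_{i<j}(z_j-z_i)\,\prod_i z_i\,\bigl(\prod_i t_i+\prod_i z_i\bigr)}{\prod_i(t_i^2-z_i^2)}\,d\zz.\]
Adding the $n!$ equal copies cancels the $\tfrac1{n!}$ in Theorem~\ref{thm:push} and leaves $\pi^{+}_{*}\alpha=\dfrac{2^{n-1}(-1)^{\binom n2}}{\prod_i t_i\,\prod_{i<j}(t_i^2-t_j^2)}\,I$; finally $\prod_{i<j}(t_i^2-t_j^2)=(-1)^{\binom n2}\prod_{i<j}(t_j^2-t_i^2)$ absorbs the sign $(-1)^{\binom n2}$ and produces exactly the prefactor $\dfrac{2^{n-1}}{\prod_i t_i\,\prod_{i<j}(t_j^2-t_i^2)}$ of Theorem~\ref{cor:push}.

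The step deserving the most care is the collapsing one: iterated residues at infinity are in general \emph{not} symmetric under relabelling the variables, so the argument really relies on the fact that the Cauchy expansion produces completely decoupled denominators, after which the residue factors as a product of one--variable residues --- this is precisely the mechanism that underlies the residue formulas of \cite{dp,mz}. One can also bypass the symmetrization entirely: in the fixed--point (localization) sum for $\pi^{+}_{*}\alpha$, grouping the contributing sign vectors according to which character $t_i$ is matched with the variable $z_i$, rather than symmetrizing over $S_n$, yields the ``diagonal'' denominator $\prod_i(t_i^2-z_i^2)$ directly and never introduces the factor $\tfrac1{n!}$. Everything else is routine bookkeeping of signs and constants, and is identical for $\OG{n}{2n}^{-}$.
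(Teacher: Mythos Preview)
Your argument is correct, and it takes a genuinely different route from the paper's.

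The paper does not derive Theorem~\ref{cor:push} from Theorem~\ref{thm:push}. Instead it first establishes the ``short'' residue formula for the full two--component Grassmannian (Theorem~\ref{shortrez}) by directly checking that the kernel $\prod_{i<j}(z_j-z_i)\prod_i z_i\big/\prod_i(t_i^2-z_i^2)$ has the correct residues $1/eu(\wedge^2(\pm\tt))$ at $z_i=\pm t_i$, i.e.\ by matching against the Atiyah--Bott--Berline--Vergne localization sum. Theorem~\ref{cor:push} then follows from Theorem~\ref{shortrez} by exactly the same $X^\pm$--correction used in the proof of Theorem~\ref{thm:push}. Your alternative ``bypass'' in the last paragraph is essentially this argument.

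Your main argument, by contrast, is a purely algebraic equivalence of the two residue expressions: the Cauchy determinant converts $\prod_{i,j}(t_i^2-z_j^2)^{-1}$ into a signed sum of decoupled denominators $\prod_i(t_{\sigma(i)}^2-z_i^2)^{-1}$, and symmetrization collapses the sum. You are right that the delicate point is the relabelling of $z$--variables inside the iterated residue, and your justification --- that after Cauchy expansion each summand is a finite linear combination of products $\prod_i g_i(z_i)$ of one--variable forms, so the iterated residue factors and is order--independent --- is exactly what is needed. The payoff of your route is that it exhibits Theorem~\ref{cor:push} as a formal consequence of Theorem~\ref{thm:push}, without returning to localization; the paper's route is shorter but relies on re-deriving the kernel from the fixed--point data.
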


Using the push-forward formulas of Theorem \ref{cor:push}, we compare the push-forwards of the Schur polynomials on the two components. The result is the following.

\begin{thm}\label{thm:schur} Let $\lambda=(\lambda_1, \dots , \lambda_n)$ be a partition, and $s_{\lambda}$ the corresponding Schur polynomial. Denote by $\rho(n)$ the partition $\rho(n) = (n, n-1, \dots, 1)$.
Then the following holds:
\[\pi^{+}_{*} s_{\lambda}(\RR^\vee) = 2^{n-1}\begin{cases}
  s_{\mu}(t_1^2, \dots, t_n^2)  &  \lambda = 2 \mu + \rho(n-1) \\
  t_1\dots t_n s_{\mu}(t_1^2, \dots, t_n^2)  &  \lambda = 2 \mu + \rho(n)\\
  0 &  \text{otherwise,}
\end{cases}
\]

\[\pi^{-}_{*} s_{\lambda}(\RR^\vee) = 2^{n-1} \begin{cases}
  s_{\mu}(t_1^2, \dots, t_n^2)  &  \lambda = 2 \mu + \rho(n-1) \\
 - t_1\dots t_n s_{\mu}(t_1^2, \dots, t_n^2)  &  \lambda = 2 \mu + \rho(n)\\
  0 &  \text{otherwise.}
\end{cases}
\]
In particular, the push-forwards of the Schur class $s_{\lambda}(\RR)$ on $\OG{n}{2n}^{+}$ and $\OG{n}{2n}^{-}$ are equal, unless $\lambda$ is of the form $2 \mu + \rho(n)$ for some partition $\mu$, in which case the push-forwards differ by the factor $(-1)$.
\end{thm}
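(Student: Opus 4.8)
The plan is to feed $\ff = s_\lambda$ into the residue formula of Theorem~\ref{cor:push} and evaluate the iterated residue at infinity directly. The starting point is the bialternant (Weyl character) formula
\[
s_\lambda(z_1,\dots,z_n)\prod_{i<j}(z_j-z_i)=(-1)^{\binom n2}\det\big(z_i^{\lambda_j+n-j}\big)=(-1)^{\binom n2}\sum_{\sigma\in S_n}\operatorname{sgn}(\sigma)\prod_{i=1}^n z_i^{\lambda_{\sigma(i)}+n-\sigma(i)},
\]
which turns the numerator in Theorem~\ref{cor:push} into a signed sum of monomials in $\zz$. For the remaining denominator one expands each factor at $z_i=\infty$ as the geometric series $\frac{1}{t_i^2-z_i^2}=-\sum_{k_i\ge 0}t_i^{2k_i}z_i^{-2k_i-2}$, so that the whole integrand becomes an explicit Laurent series in $z_1,\dots,z_n$; the iterated residue $\res_{\zz=\infty}$ then extracts the coefficient of $z_1^{-1}\cdots z_n^{-1}$, up to the global sign produced by the $n$ residues at infinity.

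Because of the extra factor $\big(\prod_i t_i\pm\prod_i z_i\big)$ the computation splits into two cases. Choosing the summand $\prod_i t_i$, the exponent of $z_i$ in a generic term is $\lambda_{\sigma(i)}+n-\sigma(i)-1-2k_i$, so the coefficient of $z_i^{-1}$ forces $k_i=\tfrac12(\lambda_{\sigma(i)}+n-\sigma(i))$, which is an admissible non-negative integer for every $i$ and every $\sigma$ exactly when $\lambda_j+n-j$ is even for all $j$, i.e. exactly when $\lambda=2\mu+\rho(n-1)$; choosing $\pm\prod_i z_i$ instead shifts the exponent by one, forcing $k_i=\tfrac12(\lambda_{\sigma(i)}+n-\sigma(i)+1)$, which is admissible precisely when $\lambda_j+n-j$ is odd for all $j$, i.e. when $\lambda=2\mu+\rho(n)$. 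These two parity conditions are mutually exclusive, and if $\lambda_j+n-j$ has mixed parities then no $\sigma$ survives in either case, giving the "otherwise" line. One also checks that the $\mu$ produced by solving $\lambda=2\mu+\rho(n-1)$ (resp. $\rho(n)$) is automatically a partition: since $\lambda$ is weakly decreasing and the adjacent values $\lambda_j+n-j$, $\lambda_{j+1}+n-j-1$ share a parity, $\lambda_j-\lambda_{j+1}$ is odd, hence $\ge 1$, so $\mu$ is weakly decreasing.

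In each surviving case the forced value of $k_i$ depends only on $\sigma(i)$, so the sum over $\sigma$ reconstitutes a determinant: after substituting the forced $k_i$ one is left with $\sum_\sigma\operatorname{sgn}(\sigma)\prod_i (t_i^2)^{\mu_{\sigma(i)}+n-\sigma(i)}=\det\big((t_i^2)^{\mu_j+n-j}\big)=s_\mu(t_1^2,\dots,t_n^2)\prod_{i<j}(t_i^2-t_j^2)$, again by the bialternant formula. The factor $\prod_{i<j}(t_i^2-t_j^2)$ cancels, up to sign, against $\prod_{i<j}(t_j^2-t_i^2)$ in the prefactor of Theorem~\ref{cor:push}; in the $\rho(n-1)$ case the leftover $\prod_i t_i$ from the chosen summand cancels the $\prod_i t_i$ in the prefactor, while in the $\rho(n)$ case the geometric-series exponents contribute an extra $\prod_i t_i^{2}$, one factor of which cancels the prefactor and the other survives as the $t_1\cdots t_n$ in the statement. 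Collecting numerical constants leaves $2^{n-1}$, and the sign $-1$ on the $\rho(n)$ line for $\pi^-_*$ is exactly the sign of $-\prod_i z_i$ in Theorem~\ref{cor:push}, whereas the $\rho(n-1)$ line only uses the $\prod_i t_i$-summand and is therefore identical for the two components; the final comparison assertion follows at once. I expect the main nuisance to be the disciplined bookkeeping of the various signs --- $(-1)^{\binom n2}$ from reordering Vandermonde factors, $(-1)^n$ from the residues at infinity, and the sign of the geometric expansion --- together with checking that the index shift lands exactly on the staircase shape $\mu_j+n-j$; once the residue is expanded, the conceptual content is routine.
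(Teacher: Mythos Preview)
Your proposal is correct and follows essentially the same route as the paper: plug the bialternant formula for $s_\lambda$ into Theorem~\ref{cor:push}, expand each $\tfrac{1}{t_i^2-z_i^2}$ as a geometric series, split according to the two summands of $\prod_i t_i\pm\prod_i z_i$, read off the parity conditions $\lambda=2\mu+\rho(n-1)$ or $2\mu+\rho(n)$, and reassemble the surviving sum over $\sigma$ into $\det\big((t_i^2)^{\mu_j+n-j}\big)=s_\mu(t_1^2,\dots,t_n^2)\prod_{i<j}(t_i^2-t_j^2)$. The only cosmetic difference is that the paper first substitutes $z_i\mapsto z_i^{-1}$ to convert $\res_{\zz=\infty}$ into $\res_{\zz=0}$ before expanding, whereas you expand directly at infinity; the two computations are line-by-line equivalent, and your additional remark that the resulting $\mu$ is automatically a partition (because the common parity of the $\lambda_j+n-j$ forces $\lambda_j-\lambda_{j+1}$ to be odd, hence $\ge 1$) is a detail the paper leaves implicit.
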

 
 The case  $\lambda = 2 \mu + \rho(n)$ is  overlooked in \cite[Theorem 5.21]{PraRat}.
Note that in \cite[below Theorem 3.1]{dp} it is claimed that the integrals over $\OG{n}{2n}^{+}$ and $\OG{n}{2n}^{-}$ are equal. This is a false statement.

\begin{example}Let $n=2$ and
$$\ff(z_1,z_2)=s_{2,1}(z_1,z_2)=z_1 z_2 (z_1 + z_2)\,.$$ 
The orthogonal Grassmannian is of dimension one and 
$$\OG{1}{2}^\pm\simeq\Pr^1\,.$$ The tangent bundle $T_{\OG{1}{2}}$ is isomorphic to $\wedge^2\RR^\vee\,.$
Then, applying the localization formula for the push-forward (see \eqref{locformula} below), we obtain
$$\pi^+_*(\alpha)=\frac{\ff(t_1, t_2)}{t_1+ t_2} + \frac{\ff(-t_1,- t_2)}{-t_1 - t_2}=\frac{t_1 t_2(t_1+ t_2)}{t_1+ t_2} + \frac{t_1 t_2 (-t_1 - t_2)}{-t_1 - t_2}=2 t_1 t_2\,,$$
while 
$$\pi^-_*(\alpha)=\frac{\ff(-t_1, t_2)}{-t_1 + t_2} + \frac{\ff(t_1,- t_2)}{t_1 - t_2}=\frac{-t_1 t_2(-t_1+ t_2)}{-t_1+ t_2} + \frac{-t_1 t_2 (t_1- t_2)}{t_1 - t_2}=-2 t_1 t_2\,.$$
Of course the extreme case $n=1$, $\ff(z_1)=s_1(z_1)$ is  more appealing. We leave the contemplation of that case to the reader.
\end{example}

\section{Cohomological formulas for the push-forward}

The proof of Theorem \ref{thm:push} is based on the following push-forward formula of \cite[Formula 4]{mz0}. 
Let $\alpha=\ff(\RR^\vee) \in \coh_{\TT}^{*}(\OG{n}{2n})$ and let $\pi: \OG{n}{2n} \to \pt$. Then
\begin{equation}\pi_{*} \alpha = \frac{2^{n}}{n!} \res_{\zz =\infty} \frac{\ff(\zz) \prod_{i\neq j}(z_j-z_i) \prod_{i < j}(z_i + z_j)\prod_{i} z_i }{\prod_{i,j}(z_i+t_j)(t_i-z_j)}d\zz. \label{twocomponents}
\end{equation}

\begin{remark} The above formula can be rewritten using the formalism of Chern roots:  
\begin{equation}\pi_{*} \alpha = \frac{1}{n!} \res_{\zz =\infty} \frac{\ff(\zz)\; \Delta(\zz) \;eu(\Sym^2(\zz))}{eu(\zz\otimes \tt)\;eu(\tt\otimes\zz^\vee))}d\zz\,,\end{equation}
Here the set of variables $\zz$ is identified with the formal vector space spanned by the characters belonging to $\zz$, the Euler class is the product of its elements and $$\Delta(\zz)=  \prod_{i\neq j}(z_j-z_i)\,.$$
The conceptual proof of the analogous formula in K--theory can be found  in \cite[\S5.2]{wz}.
\end{remark}

Let us briefly sketch the proof of Formula \ref{twocomponents}. By the Atiyah--Bott--Berline--Vergne localization formula (\cite{ab,bv}), the push-forward $\pi_{*} \alpha$ can be computed by summing up the contributions from all the fixed points, 
\begin{equation}\label{locformula}\pi_{*} \alpha = \sum_{p \in X^{\TT}} \frac{\alpha_{|p}}{eu(p)},\end{equation}
where $eu(p)$ is the product of the characters of $\TT$ which appear in the tangent representation at the fixed point $p$. On the other hand, taking the residue at infinity gives minus the sum of residues at all the poles of the function under the residue. Iterating the procedure gives a sum of residues at $z_1 = \pm t_1, \dots, z_n=\pm t_n$ as all other residues vanish, and each such residue corresponds to exactly one fixed point contribution in the localization formula.\\

There is a more  economical (from the computational point of view) expression which can serve as a factor under the residue.
The complicated product
$$\frac{2^n}{n!}\;\frac{\prod_{i\neq j}(z_j-z_i) \prod_{i < j}(z_i + z_j)\prod_{i} z_i}{\prod_{i,j}(z_i+t_j)(t_i-z_j)}$$
can be replaced by
$$\frac{2^{n}}{ \prod_{i<j}(t_j^2 - t_i^2)} \;\frac{\prod_{i<j}(z_j-z_i) \prod_{i} z_i}{\prod_{i}(t_i^2-z_i^2)}\,.$$
It is enough to check that it has the right residue at 
$$(z_1,z_2,\dots,z_n)=(\pm t_1,\pm t_2,\dots,\pm t_n)\,,$$ namely
$$\frac1{eu(\wedge^2( \pm\tt))}=\frac1{\prod_{i<j}(\pm t_i\pm t_j)}\,.$$
\begin{example}Let $n=2$ and $$K(z_1,z_2)=\frac{(z_2-z_1) z_1 z_2}{(t_1^2-z_1^2)(t_2^2-z_2^2)}\,.$$
We have 
\begin{align*}&\res_{z_1=t_1}\res_{z_2=t_2}K(z_1,z_2)d\zz= \frac{-t_1+t_2}4\,,\\
&\res_{z_1=t_1}\res_{z_2=-t_2}K(z_1,z_2)d\zz= \frac{-t_1-t_2}4\,,\\
&\res_{z_1=-t_1}\res_{z_2=t_2}K(z_1,z_2)d\zz= \frac{t_1+t_2}4\,,\end{align*}
etc. Correcting by the constant factor $\frac{4}{ t_2^2 - t_1^2} $ we obtain what we need.

\end{example}
\begin{thm}\label{shortrez}
Let $\alpha=\ff(\RR^\vee) \in \coh_{\TT}^{*}(\OG{n}{2n})$. Then 
\[\pi_* \alpha = \frac{2^{n}}{ \prod_{i<j}(t_j^2 - t_i^2)}\; \res_{\zz =\infty} \frac{\ff(\zz) \prod_{i<j}(z_j-z_i) \prod_{i} z_i}{\prod_{i}(t_i^2-z_i^2)}d\zz.\]
\end{thm}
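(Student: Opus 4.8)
The plan is to evaluate the right-hand side directly, by computing the iterated residue, and to recognize the result as the Atiyah--Bott--Berline--Vergne localization formula \eqref{locformula}. First I would set up the combinatorics of fixed points. A fixed point $p$, given by a choice $w_i\in\{e_i,f_i\}$ for $i=1,\dots,n$, is encoded by a sign vector $\epsilon=(\epsilon_1,\dots,\epsilon_n)\in\{\pm1\}^n$ chosen so that the Chern roots of $\RR^\vee$ at $p$ are $\epsilon_1 t_1,\dots,\epsilon_n t_n$; as $p$ runs over the $2^n$ fixed points, $\epsilon$ runs over all of $\{\pm1\}^n$. Then $\alpha_{|p}=\ff(\epsilon_1 t_1,\dots,\epsilon_n t_n)$, and since $T_{\OG{n}{2n}}\simeq\wedge^2\RR^\vee$ the tangent weights at $p$ are $\epsilon_i t_i+\epsilon_j t_j$ for $i<j$, so $eu(p)=\prod_{i<j}(\epsilon_i t_i+\epsilon_j t_j)$. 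Hence \eqref{locformula} becomes
\[\pi_*\alpha=\sum_{\epsilon\in\{\pm1\}^n}\frac{\ff(\epsilon_1 t_1,\dots,\epsilon_n t_n)}{\prod_{i<j}(\epsilon_i t_i+\epsilon_j t_j)}.\]

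Next I would compute the iterated residue of
\[G(\zz)=\frac{\ff(\zz)\,\prod_{i<j}(z_j-z_i)\,\prod_i z_i}{\prod_i(t_i^2-z_i^2)}.\]
The decisive feature is that this denominator is \emph{diagonal}: the variable $z_i$ occurs only in the single factor $t_i^2-z_i^2=-(z_i-t_i)(z_i+t_i)$. Therefore, taking the residue at infinity in $z_1$ -- which for a rational one-form equals minus the sum of its residues at the finite poles -- only the poles $z_1=\pm t_1$ contribute; after substituting $z_1=\epsilon_1 t_1$, the resulting form in $z_2$ again has finite poles only at $z_2=\pm t_2$, and so on down to $z_n$. (By contrast, the denominator of \eqref{twocomponents} couples all of the variables, which is why that formula needs a symmetrization together with the factor $1/n!$.) A one-variable computation gives $\res_{z_i=\epsilon_i t_i}\tfrac{z_i\,dz_i}{t_i^2-z_i^2}=-\tfrac12$ for either sign of $\epsilon_i$, while every other factor is simply evaluated at $z_i=\epsilon_i t_i$; in particular $\prod_{i<j}(z_j-z_i)\mapsto\prod_{i<j}(\epsilon_j t_j-\epsilon_i t_i)$ and $\ff(\zz)\mapsto\ff(\epsilon_1 t_1,\dots,\epsilon_n t_n)$. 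Iterating over $i=1,\dots,n$ and collecting the global sign $(-1)^n$ produced by the $n$ passages to minus the sum of the finite residues, together with the $n$ factors $-\tfrac12$, I obtain
\[\res_{\zz=\infty}G(\zz)\,d\zz=\frac{1}{2^n}\sum_{\epsilon\in\{\pm1\}^n}\ff(\epsilon_1 t_1,\dots,\epsilon_n t_n)\prod_{i<j}(\epsilon_j t_j-\epsilon_i t_i).\]

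Finally I would reconcile the two sums. Multiplying the last display by $2^n\big/\prod_{i<j}(t_j^2-t_i^2)$ and using
\[(\epsilon_j t_j-\epsilon_i t_i)(\epsilon_j t_j+\epsilon_i t_i)=\epsilon_j^2 t_j^2-\epsilon_i^2 t_i^2=t_j^2-t_i^2,\]
which holds because $\epsilon_i^2=\epsilon_j^2=1$, each summand turns into $\ff(\epsilon_1 t_1,\dots,\epsilon_n t_n)\big/\prod_{i<j}(\epsilon_i t_i+\epsilon_j t_j)$, that is, precisely $\alpha_{|p}/eu(p)$. Therefore the right-hand side of the theorem equals $\sum_p\alpha_{|p}/eu(p)=\pi_*\alpha$, an identity of rational functions in $t_1,\dots,t_n$ (both sides being, in fact, polynomials).

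The main obstacle is bookkeeping rather than any single hard step: one must keep the sign conventions straight -- the identification of fixed points with sign vectors and the resulting Chern roots of $\RR^\vee$, and the sign arising when one iterates the rule that the residue at infinity equals minus the sum of the finite residues -- and one must check that allowing $\ff$ to have arbitrarily large degree does not spoil the reduction to the $2^n$ finite poles, which it does not, since that rule holds for every rational form. The only genuinely substantive point is the factorization $t_j^2-t_i^2=(\epsilon_j t_j-\epsilon_i t_i)(\epsilon_j t_j+\epsilon_i t_i)$, which is exactly what allows the economical diagonal integrand to reproduce the localization sum with no symmetrization.
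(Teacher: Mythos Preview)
Your proof is correct and follows essentially the same route as the paper: compute the iterated residue by reducing to the finite poles $z_i=\pm t_i$, check that the resulting contribution at each sign vector $\epsilon$ equals the localization term $\alpha_{|p}/eu(p)$, and sum. The paper states this more tersely (``it is enough to check that it has the right residue at $(z_1,\dots,z_n)=(\pm t_1,\dots,\pm t_n)$'') and illustrates it for $n=2$, while you spell out the bookkeeping---in particular the key factorization $t_j^2-t_i^2=(\epsilon_j t_j-\epsilon_i t_i)(\epsilon_j t_j+\epsilon_i t_i)$---in full.
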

This formula was already applied in \cite{mz1} to compute the push-forward of Schur classes.

\begin{remark}Yet another formula is provided by \cite[Theorem 3.1]{dp}. It applies to any partial flag variety. For the orthogonal Grassmannian it specializes to  
$$\pi_*(\ff(\RR^\vee))=\Big(\ff(\zz)\;\prod_{i=1}^n(2z_i)\,\prod_{i<j}((z_i-z_j)(z_i+z_j))\,\prod_{i=1}^n s_{1/z_i}(\CC^{2n})\Big)_{[z_1^{2n-1}z_2^{2n-2}\dots z_n^{n}]}.$$
The expression above is written in formal variables $z_i$, while the $\pm t_i$ serve as the Chern roots of $\CC^{2n}$.  
Here $f(\zz)_{[\zz^\lambda]}$ denotes the coefficient of $\zz^\lambda$ in the expansion at infinity. The Segre formal series is equal to
$$s_{1/z_i}(\CC^{2n})=\prod_{j=1}^n \frac1{(1-\tfrac{ t_j}{z_i})(1+\tfrac{ t_j}{z_i})}\,,\qquad\text{expansion at }z_i=\infty\,.$$ 
In terms of residues the formula of \cite{dp} can be written as
\begin{align*}\pi_*(\ff(\RR^\vee))&=(-1)^n\res_{\zz=\infty}\ff(\zz)\frac{\prod_{i=1}^n(2z_i)\;\prod_{i<j}((z_i-z_j)(z_i+z_j))}
{\prod_{i=1}^n z_i^{2n-i+1}\prod_{i,j}(1-\tfrac{ t_j}{z_i})(1+\tfrac{ t_j}{z_i})}d\zz\\
&=(-1)^n \res_{\zz=\infty}\ff(\zz)\frac{\prod_{i=1}^n z_i^{i-1}\;\prod_{i=1}^n (2z_i)\;\prod_{i<j}(z_i^2-z_j^2)}
{\prod_{i,j}(z_j^2- t_i^2)}d\zz\\
&=2^n \res_{\zz=\infty}\ff(\zz)\frac{\prod_{i=1}^n z_i^{i}\;\prod_{i<j}(z_i^2-z_j^2)}
{\prod_{i,j}(t_i^2- z_j^2)}d\zz
\,.
\end{align*}
This  formula can be obtained from \eqref{twocomponents}, altough it has less factors.
\end{remark}

\section{Proof of formulas for the components $\OG{n}{2n}^\pm$}

In the Atiyah-Bott-Berline-Vergne formula for $\OG{n}{2n}^{\pm}$ the contributions from the fixed points remain the same for those fixed points which are in the component in question (and are zero for the remaining points). 
The fixed points of the torus action on $\OG{n}{2n}\simeq\sfrac{\Oo(V)}{\PP}$ are the isotropic coordinate subspaces.
\\

\begin{proof}[Proof of Theorem \ref{thm:push}]
In the localization formula \eqref{locformula}, the contribution form $p = \spa \{ w_1, \dots, w_n \}$ is equal to 
\[ \frac{\ff(\pm t_1, \dots, \pm t_n)}{\prod_{i,j}(\pm t_i \pm t_j)},\]
where the plus signs appear at those indices $i$ for which $w_i = f_i$. 
With our convention, after identifying $\OG{n}{2n}\simeq \sfrac{\Oo_{2n}}{\PP}$ the contribution at $\id \PP$ is equal to
\[ \frac{\ff( t_1, \dots, t_n)}{\prod_{i,j}( t_i + t_j)}\,.\]
\\

Our formula for the push-forward for $\OG{n}{2n}^{+}$ given in Theorem \ref{thm:push} differs from the formula \eqref{twocomponents} by the factor 
\[X^{+}(z_1,\dots,z_n) :=\frac{\prod_{i} t_i + \prod_{i}z_i}{2 \prod_{i}t_i}.\]

This factor does not introduce new poles in the form under the residue, as the residue is taken with respect to the $z_i$'s. The only non-vanishing residues come from $z_i = \pm t_i$, and they are equal to the fixed-point contributions in the Atiyah-Bott-Berline-Vergne formula, multiplied by the factor $X$ evaluated at $z_i = \pm t_i$. We have

\[X^{+}(\pm t_1,\dots, \pm t_n) = \frac{\prod_{i} \pm t_i + \prod_{i}t_i}{2 \prod_{i}t_i} = \frac{((-1)^{k} + 1)\prod_i t_i}{2 \prod_i t_i},\]
where $k$ is the number of indices for which $z_i = -t_i$. This expression vanishes for $k$ odd and is equal to 1 otherwise. Therefore it corrects the push-forward formula \eqref{twocomponents} by multiplying by zero the contributions coming from the fixed points in the component $\OG{n}{2n}^{-}$ and leaving the remaining contributions intact. \\

Similarly, for $\OG{n}{2n}^{-}$, the factor 
\[X^{-}(z_1,\dots,z_n) :=\frac{\prod_{i} t_i - \prod_{i} z_i}{2 \prod_{i}t_i}\]
vanishes at fixed points belonging to $\OG{n}{2n}^{+}$ and is equal to 1 on fixed points in $\OG{n}{2n}^{-}$.
\end{proof}

The identical argument shows that the push-forward formulas derived from Theorem \ref{shortrez} also work. We obtain Theorem \ref{cor:push}.

\section{Push-forwards of Schur polynomials}

We apply Theorem \ref{cor:push} to prove Theorem \ref{thm:schur}. A key trick in the computation is to use the relation between the residue at infinity and the residue at zero, $\res_{z=\infty} f(z) dz =\res_{z_i=0} \frac{(-1)}{z^2}f(z^{-1}) dz$, so that
\[\res_{\zz = \infty} f(\zz)d\zz = \res_{\zz=(0,\dots,0)} \frac{(-1)^n}{z_1^2\dots z_n^2} f(\zz^{-1})d\zz,\]
and the latter residue is simply the coefficient at $z_1^{-1}\dots z_n^{-1}$ in the series expansion for the expression under the residue.

\begin{proof}[Proof of Theorem \ref{thm:schur}]

One of the ways of defining Schur polynomials is the following formula (see e.g.~\cite[\S I.3]{MacD}):

\[s_{\lambda_1,\dots,\lambda_n}(z_1, \dots, z_n):=\frac{\det [z_j^{\lambda_i + n - i}]_{i, j=1, \dots,n} }{\prod_{1 \leq i < j \leq n}(z_i-z_j)},\]
where 
\[[z_j^{\lambda_i + n - i}]_{i, j=1, \dots,n} = 
\left[\begin{array}{c c c c}
z_1^{\lambda_1 +n-1} & z_2^{\lambda_1 +n-1} & \dots &  z_n^{\lambda_1 +n-1}\\ 
z_1^{\lambda_2 +n-2} & z_2^{\lambda_2 +n-2} & \dots &  z_n^{\lambda_2 +n-2}\\ 
\vdots & \vdots & \ddots & \vdots \\ 
z_1^{\lambda_n} & z_2^{\lambda_n} & \dots & z_n^{\lambda_n} \end{array}\right]. \]

Using the push-forward formula of Corollary \ref{cor:push} we get:

\begin{align*} 
\pi_{*}^{+} s_\lambda&(\RR^\vee) = \frac{2^{n-1}}{\prod_{i<j}(t_j^2 - t_i^2)\prod_{i} t_i} \res_{\zz =\infty} s_{\lambda}(\zz)\frac{ \prod_{i<j}(z_j-z_i) \prod_{i} z_i \ (\prod_{i} t_i + \prod_{i}z_i)}{\prod_{i}(t_i^2-z_i^2) }d\zz \\
&= \frac{2^{n-1}}{\prod_{i<j}(t_j^2 - t_i^2)}\res_{\zz =\infty}\frac{\det [z_j^{\lambda_i + n - i}]_{i, j}}{\prod_{i < j}(z_i-z_j)}\cdot \frac{ \prod_{i<j}(z_j-z_i) \prod_{i} z_i \ (\prod_{i} z_i + \prod_{i}t_i)}{ \prod_{i}(t_i^2-z_i^2)\prod_{i} t_i}d\zz \\
&= \frac{2^{n-1}(-1)^{n \choose 2}}{\prod_{i<j}(t_j^2 - t_i^2) } \res_{\zz =\infty} \frac{\det [z_j^{\lambda_i + n - i}]_{i, j} \prod_{i} z_i \ (\prod_{i} z_i + \prod_{i}t_i)}{ \prod_{i}(t_i^2-z_i^2) \prod_{i} t_i}d\zz, 
\end{align*} 
where the sign $(-1)^{n \choose 2}$ comes from cancelling out the products of $(z_i - z_j)$ in the numerator and the denominator. Denoting the constant factor in front of the residue by $$C := \frac{2^{n-1}(-1)^{n \choose 2}}{\prod_{i<j}(t_j^2 - t_i^2) }=\frac{2^{n-1}}{\prod_{i<j}(t_i^2 - t_j^2) }$$ and changing the residue at infinity to a residue at zero we get:

\begin{align*} 
\pi_{*}^{+} s_\lambda(\RR^\vee) &= C\cdot \res_{\zz =0} \frac{(-1)^n}{z_1^2 \dots z_n^2} \frac{\det [z_j^{-\lambda_i - n + i}]_{i, j} \prod_{i} z_i^{-1} \ (\prod_{i} z_i^{-1} + \prod_{i}t_i)}{ \prod_{i}(t_i^2-z_i^{-2})  \prod_{i} t_i}d\zz \\
&= C \cdot \res_{\zz =0} \frac{\det [z_j^{-\lambda_i - n + i}]_{i, j} \prod_{i} z_i^{-2} \ (1 + \prod_{i}t_i z_i)}{ \prod_{i}(1-t_i^2 z_i^2)  \prod_{i} t_i}d\zz.
\end{align*} 

Splitting the expression under the residue as the sum of two parts one gets

\[
 \frac{\det [z_j^{-\lambda_i - n + i}]_{i, j} \prod_{i} z_i^{-2} \ (1 + \prod_{i}t_i z_i)}{ \prod_{i}(1-t_i^2 z_i^2)  \prod_{i} t_i}  \]
\[=\frac{\det [z_j^{-\lambda_i - n + i}]_{i, j} \prod_{i} z_i^{-2}}{ \prod_{i}(1-t_i^2 z_i^2)  \prod_{i} t_i} + \frac{\det [z_j^{-\lambda_i - n + i}]_{i, j} \prod_{i} z_i^{-2} \ (\prod_{i}t_i z_i)}{ \prod_{i}(1-t_i^2 z_i^2)  \prod_{i} t_i}.\]

As taking the residue is an additive operation, we deal with the  two summands separately. Expanding the determinant using the permutation formula and expanding $\frac{1}{1-t_i^2z_i^2}$ into a power series we have

\begin{align*}
 A &= \frac{\det [z_j^{-\lambda_i - n + i}]_{i, j} \prod_{i} z_i^{-2}}{ \prod_{i}(1-t_i^2 z_i^2)  \prod_{i} t_i} \\
 &= \prod_{i} t_i^{-1} z_i^{-2} \sum_{\sigma \in \Sigma_n} (-1)^{sgn(\sigma)} z_{\sigma(1)}^{-\lambda_1 - n + 1} \dots z_{\sigma_n}^{-\lambda_n} \sum_{i_1, \dots, i_n=0}^{\infty} (t_1 z_1)^{2i_1} \dots (t_n z_n)^{2i_n}  \\
 &= \sum_{\sigma \in \Sigma_n} (-1)^{sgn(\sigma)} \sum_{i_{\sigma(1)}, \dots, i_{\sigma(n)}=0}^{\infty} t_{i_{\sigma(1)}}^{2 i_{\sigma(1)}-1}\dots t_{i_{\sigma(n)}}^{2 i_{\sigma(n)}-1} z_{i_{\sigma(1)}}^{2 i_{\sigma(1)}-2-(\lambda_1 + n - 1)} \dots z_{i_{\sigma(n)}}^{2 i_{\sigma(n)}-2-\lambda_n},
\end{align*}
and similarly for the other summand
\begin{align*}
 B &= \frac{\det [z_j^{-\lambda_i - n + i}]_{i, j} \prod_{i} z_i^{-2} \prod_i t_i z_i}{ \prod_{i}(1-t_i^2 z_i^2)  \prod_{i} t_i} \\
 &= \sum_{\sigma \in \Sigma_n} (-1)^{sgn(\sigma)} \sum_{i_{\sigma(1)}, \dots, i_{\sigma(n)}=0}^{\infty} t_{i_{\sigma(1)}}^{2 i_{\sigma(1)}}\dots t_{i_{\sigma(n)}}^{2 i_{\sigma(n)}} z_{i_{\sigma(1)}}^{2 i_{\sigma(1)}-1-(\lambda_1 + n - 1)} \dots z_{i_{\sigma(n)}}^{2 i_{\sigma(n)}-1-\lambda_n}.
\end{align*}
Taking the residue at zero returns the coefficient at $z_1^{-1}\dots z_n^{-1}$. For the summand $A$ it is zero unless $\lambda = \rho(n) + 2 \mu$, and for $\lambda = \rho(n) + 2 \mu$ one has 
\[\res_{\zz=0} A = \sum_{\sigma \in \Sigma_n} (-1)^{sgn(\sigma)} t_1\dots t_n t_{\sigma(1)}^{2 \mu_1+2n-2}\dots t_{\sigma(n)}^{2\mu_n}.\]
For the summand $B$, taking the residue gives zero unless $\lambda = \rho(n-1) + 2 \mu$, and in the latter case the result is
\[\res_{\zz=0} B = \sum_{\sigma \in \Sigma_n} (-1)^{sgn(\sigma)} t_{\sigma(1)}^{2 \mu_1+2n-2}\dots t_{\sigma(n)}^{2\mu_n}.\]

Finally, recognizing the sums over permutations as determinants, one gets
\begin{align*} 
\pi_{*}^{+} s_\lambda(\RR^\vee) &= C \cdot \res_{\zz =0} (A+B)\\
&=\frac{2^{n-1} }{\prod_{i<j}(t_i^2 - t_j^2)} \cdot \res_{\zz =0} (A+B)\\
&=2^{n-1} (-1)^{n \choose 2}\begin{cases}
  s_{\mu}(t_1^2, \dots, t_n^2)  &  \lambda = 2 \mu + \rho(n-1) \\
  t_1\dots t_n s_{\mu}(t_1^2, \dots, t_n^2)  &  \lambda = 2 \mu + \rho(n)\\
  0 &  \text{otherwise}
\end{cases}.
\end{align*} 

For the connected component $\OG{n}{2n}^{-}$, one immediately has
\begin{align*} 
\pi_{*}^{-} s_\lambda(\RR^\vee) &= \frac{C}{\prod_{i<j}(t_j^2 - t_i^2)} \cdot \res_{\zz =0} (A-B)\\
&=2^{n-1}  \begin{cases}
  s_{\mu}(t_1^2, \dots, t_n^2)  &  \lambda = 2 \mu + \rho(n-1) \\
  -t_1\dots t_n s_{\mu}(t_1^2, \dots, t_n^2)  &  \lambda = 2 \mu + \rho(n)\\
  0 &  \text{otherwise}
\end{cases}.
\end{align*} 
\end{proof}

\section{K--theory} The residue formulas for the push-forward in K--theory of flag varieties were given in \cite{AlRi, RiSz}. Additional formulas for a range of homogeneous spaces were derived in \cite{wz}.
Studying the K--theory push-forward formula for the even orthogonal Grassmannian is somewhat more challenging. The primary reason is the absence of symmetry. The Euler class in K--theory\footnote{For a line bundle $L$ the K--theoretic  Euler class is equal to $eu^K(L)=1-[L^\vee]$.} is not symmetric, i.e. in general
$$eu^K(E)\neq (-1)^{\rk(E)}eu^K(E^\vee)\,.$$
As in the homological case the push-forwards from the components of $\OG{n}{2n}$ do not coincide.

\begin{example}
$$\pi^{+}_!(\wedge^2\RR^\vee)=\frac{t_1 t_2}{1 - t_1^{-1} t_2^{-1}} + \frac{t_1^{-1} t_2^{-1}}{1 - t_1 t_2}=
1 + t_1 t_2 + t_1^{-1} t_2^{-1}\,,$$
while
$$\pi^{-}_!(\wedge^2\RR^\vee)=\frac{t_1^{-1}t_2}{1 - t_1 t_2^{-1}} + \frac{t_1 t_2^{-1}}{1 - t_1^{-1}t_2}=
1+t_1t_2^{-1} + t_2^{-1}t_1\,.$$
\end{example}

For a partition $\lambda=(\lambda_1\geq \lambda_2\geq\dots \geq \lambda_n)$, $\lambda_n\geq0$, let $S_\lambda(\RR^\vee)$ be the Schur functor applied to the dual tautological bundle. The result $\pi^{+}_!(S_\lambda(\RR^\vee))$ can be easily described. Consider the fibration $$\nu:\sfrac{\SO_{2n}}{\BB_-}\longrightarrow \sfrac{\SO_{2n}}{\PP}\,,$$
where $\BB_-$ is the group of lower-triangular matrices from $\SO_{2n}$. The fiber $\sfrac{\PP}{\BB_-}$ is isomorphic to the  (classical) full flag variety. Moreover, $S_\lambda(\RR^\vee)$ is equal to the push-forward
$\nu^K_!(L_\lambda)$, where $L_\lambda=\PP\times_{\BB_-}\CC_\lambda$ is the line bundle defined by the character $\lambda$. This is just the Borel-Weil-Bott Theorem applied fiberwise. Thus
$$\pi^{+}_!(S_\lambda(\RR^\vee))=(\pi^+\circ\nu)_!(L_\lambda)=\chi(V_\lambda)$$
is the character of the highest weight representation $V_\lambda$ of $\SO_{2n}$. 
One can check that if $\lambda_n=0$ then $\pi^{+}_!(S_\lambda(\RR^\vee))=\pi^{-}_!(S_\lambda(\RR^\vee))$. 
For the remaining weights the representations $\pi^{+}_!(S_\lambda(\RR^\vee))$ and $\pi^{-}_!(S_\lambda(\RR^\vee))$ differ by the conjugation induced by an element of $\Oo_{2n}\setminus \SO_{2n}$.
This is the K--theoretic counterpart of Theorem \ref{thm:schur}. The same argument applies to generalized partitions satisfying  $\lambda_{n-1}+\lambda_n\geq 0$ instead of $\lambda_n\geq 0$, which parametrize all highest weights for $\SO_{2n}$.
\\

It seems that there is no easy analogue of the residue formula generalizing Theorems \ref{thm:push} and \ref{cor:push}.

\bibliographystyle{alpha}
\bibliography{biblOG}

\end{document}